\newtheorem{lem}{Lemma}
\newtheorem{problem}{Problem}
\newtheorem{theorem}{Theorem}
\newtheorem{assumption}{Assumption}
\newtheorem{remark}{Remark}
\newtheorem{definition}{Definition}
\title{ \vspace{1cm}{\bf An Approach to Mismatched Disturbance Rejection Control for Continuous-Time Uncontrollable  Systems
}}
\begin{document}
\baselineskip 16pt
\date{}
\begin{titlepage}
  \maketitle \vspace{0.5cm}

\begin{center}
{Shichao Lv$^1$,  Hongdan Li$^1$, Kai Peng$^2$ ,  Shihua Li$^3$, Huanshui Zhang$^{1*}$  }

$^1$ {College of Electrical Engineering and Automation, Shandong University of Science and Technology, Qingdao, Shandong, P.R.China 266590.} 

$^2$ {School of Power and Energy, Northwestern Polytechnical University, Xi'an, Shaanxi, P.R.China 710072.}

$^3$ {School of Automation, Southeast University, Nanjing, Jiangsu, P.R.China 210096.}
\end{center}

\begin{center}
{\bf Abstract}
\end{center}
This paper focuses on optimal mismatched disturbance rejection control for linear continuous-time uncontrollable systems. Different from previous studies, by introducing a new quadratic performance  index to transform the mismatched disturbance rejection control into a linear quadratic tracking problem, the regulated state can track a reference trajectory and minimize the influence of disturbance. The necessary and sufficient conditions for the solvability and the disturbance rejection controller are obtained by solving a forward-backward differential equation over a finite horizon. A sufficient condition for system stability is obtained over an infinite horizon under detectable condition. This paper details our novel approach for transforming disturbance rejection into a linear quadratic tracking problem. 
The effectiveness of the proposed method is provided with two examples to demonstrate.

\ \\
{\bf Keywords:} \ Disturbance rejection control; linear quadratic tracking; continuous-time system; mismatched disturbance; uncontrollable system.

\end{titlepage}

\pagestyle{plain} \setcounter{page}{2}
\section{Introduction}
\label{sec:introduction}

The need to implement disturbance rejection techniques in controller design has become a fundamental issue in automatic control with the growing interest for high-precision control. According to the relationship with the control input, the disturbance in the system can be divided into matched and mismatched disturbances. Different disturbance rejection controllers handle disturbances using different schemes. Most of the previous studies focused on matched disturbance rejection control such as disturbance observer-based control \cite{chen2015disturbance,li2014disturbance}, active disturbance rejection control (ADRC) \cite{GUO20132911,4796887,5572931,ZHAO2014882} and sliding mode control (SMC) \cite{shtessel2014sliding,young1999control}.


By contrast, rejecting mismatched disturbances is more challenging. Mismatched disturbances widely exist in many practical systems, such as roll autopilots for missiles, permanent magnet synchronous motors, and flight control systems are affected by mismatched disturbances \cite{chen2003nonlinear,chwa2004compensation,mohamed2007design}. Compared with matched disturbances, the effects of these mismatched disturbances cannot be equivalently converted into input channels acting on the system so that they cannot be directly eliminated by the input. Therefore, no matter what control scheme employed, it is impossible to eliminate the influence of mismatched disturbance on the system state \cite{isidori1985nonlinear}. Therefore, a practical approach is to remove the effects of mismatched disturbances in some variables of interest describing the regulated state.

There are several methods \cite{castillo,chen2016adrc,ginoya2013sliding,2012Generalized,9339876,yang2012nonlinear,6129407} for dealing with mismatched disturbances. For a nonlinear system with mismatched disturbances, a novel SMC scheme based on a generalized disturbance observer was presented in \cite{ginoya2013sliding,6129407}. This scheme can reject mismatched disturbances in steady-state controlled outputs. In contrast to \cite{ginoya2013sliding,6129407}, \cite{yang2012nonlinear} treated mismatched disturbances in a multi-input multi-output system with arbitrary disturbance relative degrees. In a linear system with mismatched disturbances, a generalized extended state observer-based control (GESOBC) method was proposed for linear controllable systems to eliminate mismatched disturbances in the steady-state controlled output \cite{2012Generalized}. Similar to \cite{2012Generalized}, \cite{castillo} weakened the restriction of disturbances and improved the disturbance rejection effect by introducing high-order derivatives of disturbances.
However, previous works have mainly focused on controllable systems and these methods are not applicable to uncontrollable linear systems. Furthermore, the above studies do not consider the balance between control input energy costs and disturbance rejection. In other words, their methods of disturbance rejection control are ineffective in minimizing the cost functional.

This paper focuses on the mismatched disturbance rejection control of linear continuous-time systems. The core of this problem is the design of the controller that attenuates or eliminates the effects of mismatched disturbances on the regulated state. To this end, we introduce a novel quadratic performance index so that the regulated state can track the reference trajectory and minimize the effect of disturbances. In this way, we transform the mismatched disturbance rejection control into a linear quadratic tracking (LQT) problem. It should be noted that the performance index of this mismatched disturbance rejection control problem is different from the standard LQT problem. The new performance index enables the regulated state can track a reference trajectory and minimize the influence of disturbance. To the best of our knowledge, this is a novel approach to mismatched disturbance rejection control using LQT control.


The main contributions of this paper are summarized as follows. Firstly, the necessary and sufficient conditions for the existence of the disturbance rejection controller are derived from the Riccati differential equation over a finite horizon, and the analytical expression of the controller is obtained.
Secondly, in contrast to \cite{castillo,gandhi2020hybrid,2012Generalized} which requires the system to be controllable, we derived stabilization results under the detectability condition alone. We provide the sufficient condition of stabilization over an infinite horizon based on the generalized algebraic Riccati equation (GARE) with a pseudo-inverse matrix. And we further extend the proposed method to receding-horizon control so that it can handle measurable disturbance in real time. Finally, we demonstrate the effectiveness and feasibility of the method by comparing the simulations of proportional-integral-derivative (PID) control in an uncontrollable numerical example, PID and GESOBC methods in a permanent magnet direct current (PMDC) system.



The remainder of this paper is organized as follows. The mismatched disturbance rejection control problem for linear continuous-time system and some necessary lemmas and definitions are introduced in  Section \ref{sec2}. In Section \ref{sec3}, the design and analysis of the controller over a finite horizon are presented. The stability of the system under the proposed controller over an infinite horizon is analyzed in Section \ref{sec4}. Two examples are provided to demonstrate the effectiveness of the proposed method in Section \ref{sec5}. The final section is our conclusion. And the proofs are in the Appendix.

Notation: $\mathbb{R}^{n}$ represents the $n$-dimensional Euclidean space; the superscripts $'$, $^{-1}$, $^{\dagger}$, and $\Vert \cdot \Vert$ represent the transpose, inverse, pseudo-inverse, and  2-norm of a matrix, respectively; a symmetric matrix $M > 0$ ($\ge$ 0) is positive definite (positive semi-definite); $I$ denotes the unit matrix; $O$ denotes the zero matrix; and $\rho(\cdot)$ denotes a matrix eigenvalue.

\section{Problem Formulation}\label{sec2}
We consider the linear continuous-time system with mismatched disturbances which are described as follows. 
\begin{equation}\label{eq.1}
\begin{aligned}
&\dot{x}_t={Ax}_t+{B} {u}_t +{E}d_t, x_0=x,
\end{aligned}
\end{equation}
where ${x}_t\in{\mathbb{R}}^n$, ${u}_t\in \mathbb{R}^m$, and ${d}_t\in \mathbb{R}^q$ are the state, control input, and disturbance, respectively. $x\in{\mathbb{R}}^n$ is initial value. $ {A}\in\mathbb{R}^{n\times n}$, ${B}\in\mathbb{R}^{n\times m}$, and $ {E}\in\mathbb{R}^{n\times q}$ are known deterministic coefficient matrices. 

\begin{remark}\label{rem.1}
In \eqref{eq.1}, $d_t$ expresses a known disturbance. The $d_t$  includes disturbances for which disturbance models are available \cite{yang1994disturbance,10.1115/1.2896180} and measurable disturbances \cite{9050185,GUO201941}. The problem that a disturbance is only measurable at the current time can be solved by extending receding-horizon control, see the end of Section \ref{sec4} for details.
\end{remark}
\begin{remark}\label{rem.2}
A mismatched disturbance means that the matching condition ($B\Gamma = E$ for some $\Gamma$) does not hold, which implies that disturbance affect the system through the different channel as the control input or the effects of disturbance cannot be equivalently transformed into input channels.
\end{remark}

In order to make the regulated state of the system optimally track the reference based on the compensation for the disturbance, we transform the mismatched disturbance rejection problem of the linear continuous-time system \eqref{eq.1} into an LQT problem.


Therefore, we define the following cost functional:
\begin{align}\label{eq.2}
J_T=&\frac{1}{2}\int_{0}^{T}[(x_t-r)'Q(x_t-r)+({B} {u}_t +{E}d_t)'R({B} {u}_t +{E}d_t)]\mathrm{d}t+\frac{1}{2}(x_T-r)'P_T(x_T-r),
\end{align}
where $Q={{c}_o}'\bar Q{c}_o$ ($c_o\in\mathbb{R}^{l\times n}$, $\bar Q\in\mathbb{R}^{l\times l}$ is a semi-positive definite weight matrix), $x_T$ is the terminal state, and $Q, R, P_{T}\in\mathbb{R}^{n\times n}$ are semi-positive definite.

%

\begin{remark}
The cost functional consists of two parts. The first part
\begin{align}\label{f6.1}
(x_{t}-r)'Q(x_{t}-r)=({c}_ox_{t}-{c}_or)'\bar Q({c}_ox_{t}-{c}_or)
\end{align}
represents the error between the regulated state $c_ox_{t}$ and the reference state ${c}_or$ such that the regulated state tracks the reference. The second part $(Bu_{t}+Ed_{t})'R(Bu_{t}+Ed_{t})$ is the sum of the control input and disturbance for the control input to compensate the disturbance.
\end{remark}
\begin{remark}
The proposed cost functional \eqref{eq.2} can be applied to both matching and mismatching cases. Obviously, the invertible matrix $B$ is the simplest case, where $Bu_{t}+Ed_{t}=0$ has a solution. In case of $B$ is irreversible, it needs to discuss whether a solution of $Bu_{t}+Ed_{t}=0$ exists. A solution with $Bu_{t}+Ed_{t}=0$ exists when $B\Gamma=E$ for some $\Gamma$ (ADRC is suitable for this type of scenario). The most complicated case is dealing with $\Gamma$ not satisfying $B\Gamma=E$, i.e. mismatching case, which is the main focus of this study.
\end{remark}

\begin{problem}\label{problem}
Find the control $u_t$ such that the effects of the disturbance $d_t$ are minimized, that is, $Bu_t+Ed_t$ is minimized, and the state $x_t$ tracks the reference $r$.
\end{problem}

\section{Optimization over a Finite Horizon}\label{sec3}
First, the following lemma is the key to the solvability of Problem \ref{problem}.
\begin{lem}\cite{qi2018stabilization}\label{l1}
Problem \ref{problem} is uniquely solvable if and only if the following FBDEs have a unique solution,
\begin{eqnarray}
\left\{
\begin{array}{lll}
B'RBu_t+B'REd_t+B'\lambda_t=0\\
\dot{\lambda}_t=Q(r-x_t)-A'\lambda_t\\
\dot{x}_t={Ax}_t+{B} {u}_t +{E}d_t\\
\lambda_{T}=P_{T}(x_{T}-r)\\
x_0=x.
\end{array}
\right.\label{f2.10}
\end{eqnarray}
\end{lem}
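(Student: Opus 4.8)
The plan is to recognize the system \eqref{f2.10} as exactly the first-order (stationarity) conditions of the convex optimization problem ``minimize $J_T$ over $u$'', and then to promote stationarity to global optimality by a completion-of-squares argument, so that optimal controls and solutions of \eqref{f2.10} end up in one-to-one correspondence. Note first that, once \eqref{eq.1} is substituted into \eqref{eq.2}, $J_T$ is a quadratic functional of $u\in L^2([0,T];\mathbb{R}^m)$ with positive semi-definite Hessian (because $Q,R,P_T\ge 0$), and the known disturbance $d_t$ enters only as a fixed forcing term; controllability of $(A,B)$ plays no role over a finite horizon.

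For \emph{necessity}, I would run the standard variational argument. Assume $u^*$ is optimal with trajectory $x^*$, and define the costate $\lambda_t$ as the unique solution of the linear backward equation $\dot\lambda_t=Q(r-x^*_t)-A'\lambda_t$, $\lambda_T=P_T(x^*_T-r)$. For an arbitrary perturbation $v\in L^2([0,T];\mathbb{R}^m)$, the map $\epsilon\mapsto J_T(u^*+\epsilon v)$ is a quadratic polynomial attaining its minimum at $\epsilon=0$, so its linear coefficient vanishes. I would compute that coefficient and eliminate all state-dependent terms by integrating the identity $\tfrac{d}{dt}(\lambda_t'\delta x_t)=-(x^*_t-r)'Q\delta x_t+\lambda_t'B\delta u_t$ over $[0,T]$ along the linearized dynamics $\dot{\delta x}_t=A\delta x_t+B\delta u_t$, $\delta x_0=0$, using the terminal condition on $\lambda$. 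What remains is $\int_0^T\big(B'RBu^*_t+B'REd_t+B'\lambda_t\big)'v_t\,\mathrm{d}t=0$ for all $v$, hence the first line of \eqref{f2.10}; the other lines are just the definition of $\lambda_t$ and the state equation. Thus $(x^*,u^*,\lambda)$ solves \eqref{f2.10}.

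For \emph{sufficiency}, I would take any solution $(x,u,\lambda)$ of \eqref{f2.10} and any admissible $\tilde u$ with trajectory $\tilde x$, set $\delta x=\tilde x-x$, $\delta u=\tilde u-u$, and expand $J_T(\tilde u)-J_T(u)$ into a cross term plus the quadratic remainder $\tfrac12\big[\int_0^T(\delta x_t'Q\delta x_t+\delta u_t'B'RB\delta u_t)\,\mathrm{d}t+\delta x_T'P_T\delta x_T\big]$, which is nonnegative since $Q,B'RB,P_T\ge 0$. The same integration by parts with $\lambda_t$ collapses the cross term to $\int_0^T\big(B'\lambda_t+B'R(Bu_t+Ed_t)\big)'\delta u_t\,\mathrm{d}t$, which is zero by the first equation of \eqref{f2.10}; hence $J_T(\tilde u)\ge J_T(u)$ and the control component of any solution of \eqref{f2.10} is optimal. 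Finally I would observe that a solution of \eqref{f2.10} is pinned down by its control component ($u$ determines $x$ through the state equation with $x_0=x$, and $x$ then determines $\lambda$ through the backward equation with $\lambda_T=P_T(x_T-r)$), so the map $(x,u,\lambda)\mapsto u$ is a bijection from solutions of \eqref{f2.10} onto optimal controls of Problem~\ref{problem}; existence and uniqueness therefore transfer in both directions, which is the assertion. (Since the lemma is attributed to \cite{qi2018stabilization}, an alternative is simply to recast \eqref{eq.1}--\eqref{eq.2} into the framework there and quote it; the argument above keeps the paper self-contained.)

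The main obstacle is that $R$ penalizes the combination $Bu_t+Ed_t$ rather than $u_t$ itself, so the effective control weight $B'RB$ may be singular. Consequently the stationarity relation is only the projected identity $B'R(Bu_t+Ed_t)+B'\lambda_t=0$, the minimizer of $J_T$ need not be unique a priori, and this is precisely why the statement is phrased as an equivalence with \emph{unique} solvability of \eqref{f2.10}. The step doing the real work is thus the bijection observation in Step~3: it is what lets uniqueness pass from the FBDEs to the control problem and back, and it must be handled with care rather than treated as cosmetic. The remaining points --- fixing the admissible function space for $u$ and justifying differentiation under the integral in the variational step --- are routine.
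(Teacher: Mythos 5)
Your argument is correct, but note that the paper itself does not prove this lemma at all: it is quoted verbatim from \cite{qi2018stabilization} as the maximum-principle characterization of the LQT problem, and the paper's own computations only begin at Theorem \ref{t1}, where the FBDEs \eqref{f2.10} are taken as the starting point. Your proposal therefore supplies what the paper outsources: the necessity step is the standard Pontryagin/variational computation (vanishing Gateaux derivative plus the costate integration by parts), the sufficiency step is the convexity/completion-of-squares expansion in which the cross term collapses to $\int_0^T\bigl(B'R(Bu_t+Ed_t)+B'\lambda_t\bigr)'\delta u_t\,\mathrm{d}t=0$, and the bijection observation (a solution triple of \eqref{f2.10} is determined by its control component, while necessity and sufficiency show the control components are exactly the optimal controls) is precisely what makes the statement an equivalence of \emph{unique} solvability rather than mere optimality, so you are right to flag it as the load-bearing step given that $\Upsilon=B'RB$ may be singular. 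Your sufficiency computation is also structurally the same completion-of-squares identity the paper later uses in Appendix A to prove sufficiency in Theorem \ref{t1} (there with the Riccati substitution $\lambda_t=P_tx_t+f_t$), so your proof is fully consistent with the paper's subsequent development; the only implicit point worth stating explicitly is that the necessity direction presupposes an optimal control exists, which is harmless since ``uniquely solvable'' in Problem \ref{problem} includes existence, and the remaining technicalities (admissible class $L^2([0,T];\mathbb{R}^m)$, a.e.\ interpretation of the stationarity relation) are routine as you say.
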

The goal of the disturbance rejection controller design in this paper is to reduce the effect of mismatched disturbance and make the regulated state of the system follow the reference. To obtain this result, the main technique is to solve the FBDEs \eqref{f2.10}. 

For deriving the solution of Problem 1, we define the following generalized Riccati differential equation (GRDE).
Denote $P_t$ and $M_t$ with the terminal time $T$ as $P_t(T)$ and $M_t(T)$, respectively. Under the regular condition that
\begin{align}\label{2.18}
\Upsilon\Upsilon^{\dag}M_t(T)=M_t(T),
\end{align}

we introduce the GRDE:
\begin{align}\label{2.19}
\dot{P}_t(T)=M'_t(T)\Upsilon^{\dag}M_t(T)-P_t(T)A-A'P_t(T)-Q,
\end{align}
where
\begin{align}
\Upsilon&=B'RB,\label{2.2011}\\
M_t(T)&=B'P_t(T)\label{2.20}
\end{align}
with a terminal value $P_{T}(T)=0$.

\begin{remark}\label{rem0}
For the special case $\Upsilon=B'RB>0$ in \eqref{2.18}, the corresponding Riccati equation is
\begin{align}\label{2.9}
\dot{P}_t=M'_t\Upsilon^{-1}M_t-P_tA-A'P_t-Q,
\end{align}
where
\begin{align}\label{2.2002}
M_t=B'P_t.
\end{align}
\end{remark}


Based on the lemma and definition in the preliminaries, the following theorem is given.

\begin{theorem}\label{t1}
Problem \ref{problem} exists a unique solution $u_t$ if and only if $\Upsilon>0$ in \eqref{2.2011}. In that case, the optimal controller is given by
\begin{align}\label{eq.3}
u_t=-\Upsilon^{-1}M_tx_t-\Upsilon^{-1}h_t,
\end{align}
where $h_{t}$ satisfies the following equation:
\begin{equation}\label{eq.4}
\left\{
\begin{aligned}
&h_t=B'REd_t+B'f_t,\\
&\dot{f}_t=M'_t\Upsilon^{-1}h_t-A'f_t-P_tEd_t+Qr,\\
&f_T=-P_Tr.
\end{aligned}
\right.
\end{equation}
The optimal cost functional is given as
\begin{align}\label{2.41}
J^*=\frac{1}{2}x_0'Px_0+{x}'_0f_0+H_0,
\end{align}
where ${H}_t$ is the solution of
\begin{align}\label{eq.3.11}
\dot{H}_t=\frac{1}{2}h'_t\Upsilon^{-1}h_t-d'_tE'f_t-\frac{1}{2}r'Qr-\frac{1}{2}d'_tE'REd_t
\end{align}

with final condition $H_T=r'P_Tr$.
\end{theorem}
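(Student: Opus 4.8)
```latex
\textbf{Proof proposal for Theorem \ref{t1}.}

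The plan is to solve the FBDEs \eqref{f2.10} from Lemma \ref{l1} explicitly, using the standard ``sweep method'' (an affine ansatz relating the costate to the state). First I would show necessity and sufficiency of $\Upsilon>0$: from the first equation of \eqref{f2.10}, $B'RBu_t = -B'REd_t - B'\lambda_t$, so a unique $u_t$ can be recovered for \emph{every} admissible right-hand side precisely when $\Upsilon = B'RB$ is invertible, i.e. (being positive semidefinite by construction) when $\Upsilon>0$. Conversely, if $\Upsilon>0$ then $u_t=-\Upsilon^{-1}(B'REd_t+B'\lambda_t)$ is forced, and substituting into the $\dot x_t$ equation yields a linear two-point boundary value problem in $(x_t,\lambda_t)$ whose unique solvability I would get from the affine ansatz below being well defined for all $t\in[0,T]$.

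Next I would posit $\lambda_t = P_t x_t + f_t$ with $P_T = P_T(T)$ given by the terminal condition $\lambda_T = P_T(x_T-r)$, which forces $P_T(T)=0$ is \emph{not} quite right — rather I match $\lambda_T = P_T x_T - P_T r$, so the Riccati variable carries terminal value $P_T$ and the affine part carries $f_T = -P_T r$, consistent with \eqref{eq.4}. Differentiating the ansatz, $\dot\lambda_t = \dot P_t x_t + P_t\dot x_t + \dot f_t$. I substitute $\dot x_t = Ax_t + Bu_t + Ed_t$ with $u_t = -\Upsilon^{-1}(B'P_t x_t + B'f_t + B'REd_t) = -\Upsilon^{-1}M_t x_t - \Upsilon^{-1}h_t$ where $h_t := B'f_t + B'REd_t$ as in \eqref{eq.4}, and also substitute $\dot\lambda_t = Q(r-x_t) - A'\lambda_t = -Qx_t + Qr - A'P_t x_t - A'f_t$ from the second line of \eqref{f2.10}. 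Collecting the terms that multiply $x_t$ gives exactly the GRDE \eqref{2.19}–\eqref{2.20} (in the $\Upsilon>0$ form $\dot P_t = M_t'\Upsilon^{-1}M_t - P_tA - A'P_t - Q$ of Remark \ref{rem0}), and the remaining ($x_t$-free) terms give the $\dot f_t$ equation in \eqref{eq.4}: $\dot f_t = M_t'\Upsilon^{-1}h_t - A'f_t - P_tEd_t + Qr$. This simultaneously proves well-posedness (the Riccati equation has a solution on $[0,T]$ because, for $Q,R,P_T\succeq0$, $P_t$ stays bounded — a standard LQ argument — and then $f_t$, hence $h_t$, is obtained from a linear ODE) and yields the controller formula \eqref{eq.3}.

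Finally, for the optimal cost I would use the completion-of-squares / value-function identity: set $V_t = \tfrac12 x_t'P_t x_t + x_t'f_t + H_t$ and compute $\tfrac{d}{dt}V_t$ along the closed-loop trajectory, using the GRDE and the $\dot f_t$ equation to cancel all $x_t$-dependent terms, so that $\tfrac{d}{dt}V_t$ equals minus the running cost in \eqref{eq.2} provided $H_t$ satisfies \eqref{eq.3.11} with $H_T = r'P_Tr$ (the terminal matching $V_T = \tfrac12 x_T'P_T x_T + x_T'f_T + H_T = \tfrac12(x_T-r)'P_T(x_T-r)$ fixes $H_T$ after expanding $f_T=-P_Tr$; note a factor/normalization check is needed here). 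Integrating from $0$ to $T$ then gives $J_T = V_0 - V_T + (\text{terminal cost}) = \tfrac12 x_0'P_0 x_0 + x_0'f_0 + H_0 =: J^*$, which is \eqref{2.41}. The main obstacle I anticipate is the bookkeeping in this last step — keeping the factors of $\tfrac12$, the cross terms from $(x_t-r)'Q(x_t-r)$ and from $(Bu_t+Ed_t)'R(Bu_t+Ed_t)$ straight, and verifying that the substitution of the optimal $u_t$ really does reduce the $R$-term to the $h_t'\Upsilon^{-1}h_t$ and $d_t'E'REd_t$ pieces appearing in \eqref{eq.3.11} — rather than anything conceptually deep; the Riccati solvability and the $\Upsilon>0$ dichotomy are routine given Lemma \ref{l1}.
```
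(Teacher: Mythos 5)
Your proposal is correct and follows essentially the same route as the paper's Appendix~A proof: the sweep ansatz $\lambda_t=P_tx_t+f_t$ applied to the FBDEs of Lemma~\ref{l1} produces the Riccati and $f_t$ equations and the feedback law, $\Upsilon>0$ is precisely what makes $u_t$ uniquely recoverable from the stationarity condition, and the cost formula follows by integrating $\tfrac{d}{dt}\bigl[\tfrac12 x_t'P_tx_t+x_t'f_t+H_t\bigr]$ and completing the square in $u_t$ (do this for an arbitrary admissible control, as the paper does, rather than only along the closed loop, since that is what simultaneously yields optimality and uniqueness from the positive definiteness of $\Upsilon$). Your flagged ``factor/normalization check'' at $t=T$ is indeed warranted: matching $\tfrac12 x_T'P_Tx_T+x_T'f_T+H_T$ with the terminal cost $\tfrac12(x_T-r)'P_T(x_T-r)$ gives $H_T=\tfrac12 r'P_Tr$, so the constant in the stated final condition $H_T=r'P_Tr$ is exactly the bookkeeping point you anticipated.
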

\begin{proof}
The proofs of Theorem \ref{t1} can be found in Appendix A.
\end{proof}
\begin{remark}\label{r11}
The obtained controller \eqref{eq.3} is consistent with that by the existing method. Due to the consideration of the optimality of the disturbance rejection control, the difference from the existing method is that the obtained control parameters of the controller are derived from the Riccati equation instead of directly calculated by the system coefficients.
\end{remark}
\begin{remark}\label{r6}
It is worth noting that disturbances in the regulated state can be optimally eliminated by the controller proposed in Theorem \ref{t1}. This result is novel to our knowledge.
\end{remark}


\section{Stabilization over an Infinite Horizon}\label{sec4}
In order to facilitate the analysis of the stability of the system over an infinite horizon, on the basis of Theorem \ref{t1}, we first give the following lemma.
\begin{lem}\label{l5}
Assume that the GRDE \eqref{2.19}–\eqref{2.20} yields
a solution. Then, Problem \ref{problem} has a solution expressed as
\begin{align}\label{2.201}
u_t=-\Upsilon^{\dag}M_t(T)x_t-\Upsilon^{\dag}h_t(T),
\end{align}
where $h_t(T)$ satisfies the following equation:
\begin{equation}\label{2.21}
\left\{
\begin{aligned}
&h_t(T)=B'f_t(T)+B'REd_t,\\
&\dot{f}_t(T)=M'_t(T)\Upsilon^{\dag}h_t(T)-A'f_t(T)-P_t(T)Ed_t+Qr,\\
&f_T(T)=-P_T(T)r.
\end{aligned}
\right.
\end{equation}
The optimal cost function is given as
\begin{align}\label{2.22}
J_T^*=&\frac{1}{2}x'_0P_0(T)x_0+{x}'_0f_0(T)+H_0(T),
\end{align}
where 
\begin{align}\label{2.23}
\dot{H}_t(T)=\frac{1}{2}h'_t(T)\Upsilon^{\dag}h_t(T)-d'_tE'f_t(T)-\frac{1}{2}r'Qr-\frac{1}{2}d'_tE'REd_t
\end{align}
with terminal condition $H_T(T)=r'P_T(T)r$.
\end{lem}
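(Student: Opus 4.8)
The plan is to produce, by construction, a triple $(x_t,u_t,\lambda_t)$ that solves the forward--backward differential equations \eqref{f2.10}; since the cost \eqref{eq.2} is convex in $u$, the stationarity system \eqref{f2.10} is sufficient for optimality, so by Lemma~\ref{l1} this shows that the control \eqref{2.201} solves Problem~\ref{problem}. The whole argument runs parallel to the proof of Theorem~\ref{t1}, with the inverse $\Upsilon^{-1}$ replaced throughout by the pseudo-inverse $\Upsilon^{\dagger}$ and with the regular condition \eqref{2.18} invoked precisely where $\Upsilon\Upsilon^{\dagger}$ has to act as an identity.

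First I would set up the sweep ansatz $\lambda_t=P_t(T)x_t+f_t(T)$, where $P_t(T)$ is the assumed solution of the GRDE \eqref{2.19}--\eqref{2.20} (symmetric, since that equation preserves symmetry) and $f_t(T)$ is the solution of the backward linear ODE in \eqref{2.21}, which is well defined once $P_t(T)$, $d_t$ and $r$ are given. The terminal condition in \eqref{f2.10} then follows from $f_T(T)=-P_T(T)r$. For the stationarity equation in \eqref{f2.10}: using $M_t(T)=B'P_t(T)$ and the definition of $h_t(T)$ in \eqref{2.21}, one gets $B'REd_t+B'\lambda_t=M_t(T)x_t+h_t(T)$, while \eqref{2.201} gives $B'RBu_t=\Upsilon u_t=-\Upsilon\Upsilon^{\dagger}M_t(T)x_t-\Upsilon\Upsilon^{\dagger}h_t(T)=-M_t(T)x_t-h_t(T)$, the last step being \eqref{2.18} together with the companion identity $\Upsilon\Upsilon^{\dagger}h_t(T)=h_t(T)$ (which holds for the same structural reason, $h_t(T)$ like $M_t(T)$ being built from $B'$). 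The only computationally involved step is the costate equation: differentiating the ansatz, inserting $\dot x_t$ from \eqref{eq.1} with $u_t$ from \eqref{2.201}, the GRDE for $\dot P_t(T)$ and the $f_t(T)$-equation, and then using $P_t(T)B=M_t'(T)$ to cancel the terms $M_t'(T)\Upsilon^{\dagger}M_t(T)x_t$ and $M_t'(T)\Upsilon^{\dagger}h_t(T)$, leaves exactly $\dot\lambda_t=Q(r-x_t)-A'\lambda_t$. The state equation holds by construction, so \eqref{f2.10} is satisfied.

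For the optimal cost I would take the candidate value function $V_t=\tfrac{1}{2} x_t'P_t(T)x_t+x_t'f_t(T)+H_t(T)$ with $H_t(T)$ given by \eqref{2.23}, and differentiate it along the closed loop. Using the same cancellations and the pseudo-inverse rule $\Upsilon^{\dagger}\Upsilon\Upsilon^{\dagger}=\Upsilon^{\dagger}$, one obtains $\dot V_t=-\tfrac{1}{2}\big[(x_t-r)'Q(x_t-r)+(Bu_t+Ed_t)'R(Bu_t+Ed_t)\big]$, i.e. minus one half of the integrand of \eqref{eq.2}. Integrating over $[0,T]$ and matching $V_T$ with the terminal penalty $\tfrac{1}{2}(x_T-r)'P_T(x_T-r)$ via the terminal value of $H_t(T)$ yields $J_T^{*}=V_0$, which is \eqref{2.22}. (Alternatively, substitute \eqref{2.201} into $J_T$ directly and integrate by parts with the costate; the bookkeeping is identical.)

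I expect the main obstacle to be the handling of the singular $\Upsilon$: one must check that in every place where $\Upsilon^{-1}$ was used in the finite-horizon proof the quantity being inverted actually lies in the range of $\Upsilon$ --- this is exactly what \eqref{2.18} (for the $M_t(T)x_t$ part) and its $h_t(T)$-analogue provide --- and that no spurious $(I-\Upsilon\Upsilon^{\dagger})$ factors survive the costate and cost computations. Apart from this point, the proof is a routine, if somewhat lengthy, transcription of the argument for Theorem~\ref{t1}.
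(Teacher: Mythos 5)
Your overall architecture is exactly the one the paper intends (its proof is omitted with the remark that it follows Theorem \ref{t1} with $\Upsilon^{-1}$ replaced by $\Upsilon^{\dagger}$ under \eqref{2.18}): sweep ansatz $\lambda_t=P_t(T)x_t+f_t(T)$, verification of the FBDEs \eqref{f2.10}, and a completion-of-squares for the cost. However, there is a genuine gap at the point you yourself flag as the main obstacle. You need $\Upsilon\Upsilon^{\dagger}h_t(T)=h_t(T)$, and your justification --- that $h_t(T)$, ``like $M_t(T)$, is built from $B'$'' --- does not work. Since $R$ is only semi-positive definite, $\mathrm{range}(\Upsilon)=\mathrm{range}(B'RB)=\mathrm{range}\bigl(B'R^{1/2}\bigr)$, which is in general a strict subspace of $\mathrm{range}(B')$. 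The piece $B'REd_t=B'R^{1/2}\bigl(R^{1/2}Ed_t\bigr)$ does lie in $\mathrm{range}(\Upsilon)$, but the piece $B'f_t(T)$ only lies in $\mathrm{range}(B')$, and nothing in \eqref{2.18} (which constrains $M_t(T)=B'P_t(T)$, not $f_t(T)$) forces $\Upsilon\Upsilon^{\dagger}B'f_t(T)=B'f_t(T)$. In the extreme case $R=0$ one has $\Upsilon^{\dagger}=0$ while $h_t(T)=B'f_t(T)$ is generally nonzero, so the stationarity equation $\Upsilon u_t+M_t(T)x_t+h_t(T)=0$ fails for your candidate control. Note that the paper's Theorem \ref{t1} proof quietly covers this by imposing the stronger regular condition $\Upsilon\Upsilon^{\dagger}V_t=V_t$ with $V_t=-B'P_tx_t-B'REd_t-B'f_t$, i.e.\ a condition involving $f_t$ and $d_t$, not just \eqref{2.18}.

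The same range condition is needed again, not only in the stationarity check: your completion of squares in the cost computation is exact only if $\Upsilon\Upsilon^{\dagger}\bigl(M_t(T)x_t+h_t(T)\bigr)=M_t(T)x_t+h_t(T)$, since
\begin{equation*}
\tfrac{1}{2}u'\Upsilon u+u'(M_t(T)x_t+h_t(T))+\tfrac{1}{2}(M_t(T)x_t+h_t(T))'\Upsilon^{\dagger}(M_t(T)x_t+h_t(T))
\end{equation*}
equals the completed square only on the range of $\Upsilon$. To close the proof you should either (i) add the hypothesis $\Upsilon\Upsilon^{\dagger}B'f_t(T)=B'f_t(T)$ (equivalently $\Upsilon\Upsilon^{\dagger}h_t(T)=h_t(T)$) alongside \eqref{2.18}, mirroring the regular condition actually used in Appendix A, or (ii) prove that $B'f_t(T)\in\mathrm{range}\bigl(B'R^{1/2}\bigr)$ from the structure of the backward equation in \eqref{2.21}, which does not follow in general. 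Apart from this point, the verification of the costate equation, the terminal conditions, and the identification of $J_T^{*}$ with $V_0$ are routine and correctly outlined.
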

\begin{proof}
Following the line of Theorem 1, the result can be derived similarly under the regular condition \eqref{2.18}. To avoid redundancy,
we have omitted this proof here.
\end{proof}

Accordingly, we consider the cost functional over an infinite horizon as follows
\begin{align}\label{2.25}
J=&\lim_{T\to \infty}\frac{1}{2T}\int_{0}^{T}[(x_t-r)'Q(x_t-r)+({B} {u}_t +{E}d_t)'R({B} {u}_t +{E}d_t)]\mathrm{d}t.
\end{align}

We now introduce certain definitions and assumptions.
\begin{definition}
The system $(A, Q^{\frac{1}{2}} )$
\begin{equation}\label{2.26}
\left\{
\begin{aligned}
&\dot{x}_t=Ax_t,\\
&y_t=Q^{\frac{1}{2}}x_t,
\end{aligned}
\right.
\end{equation}
is detectable if for any $T\ge0$, the following holds:
\begin{align}
y_t=0,\forall0\leq t \leq T \Rightarrow \lim_{t\to \infty}x_t=0.\notag
\end{align}
\end{definition}
\begin{assumption}\label{a51}
$d_{t}, t\geq 0$ is bounded and $\lim_{t\rightarrow \infty}d_{t}=d.$
\end{assumption}
\begin{assumption}\label{a52}
 $(A,\sqrt{Q})$ is detectable.
\end{assumption}
We define the GARE as follows
\begin{align}
0=&PA+A'P+Q-M'\Upsilon^{\dag}M, \label{5403}
\end{align}
where
\begin{align}
\Upsilon=&B'RB,\label{5405}\\
M=&B'P.\label{5404}
\end{align}

Now, consider the following system without disturbances, i.e., the standard linear quadratic regulation problem:
\begin{equation}
\left\{
\begin{aligned}\label{5601}
&min  \bar{J}=\frac{1}{2}\int_{0}^{\infty}[x_{t}'Qx_{t}+u_{t}'B'RBu_{t}]\mathrm{d}t,\\
&s.t. \quad \dot{x}_{t}=Ax_{t}+Bu_{t},
\end{aligned}
\right.
\end{equation}
where $Q$ and $R$ are both semi-positive definite.

The result of the above problem can be expressed as follows.
\begin{lem}\label{l56}\cite{qi2018stabilization}
Suppose that Assumption \ref{a52} holds and the system (\ref{5601}) can be stabilized if and only if the GRDE (\ref{2.19}) converges when $T\rightarrow \infty$ (i.e. $\lim_{T\rightarrow \infty}P_{t}(T)=P$). Furthermore, $P$ is a solution of the GARE (\ref{5403})–(\ref{5404}) and $P\geq 0$. In this case, the stabilizing controller is
\begin{align}
u_{t}=-\Upsilon^{\dagger}Mx_{t}, \label{5410}
\end{align}
and the optimal cost value is
\begin{align}
\bar{J}^{\ast}=\frac{1}{2}x_{0}'Px_{0}. \label{5411}
\end{align}
\end{lem}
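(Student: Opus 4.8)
The plan is to prove Lemma~\ref{l56} by leveraging the finite-horizon machinery from Theorem~\ref{t1} and Lemma~\ref{l5}, specialized to the disturbance-free case $d_t \equiv 0$, $r = 0$, and then passing to the limit $T \to \infty$. First I would observe that under these specializations the cost functional \eqref{eq.2} reduces exactly to the finite-horizon truncation of \eqref{5601} (with terminal penalty $P_T = 0$), the forcing terms $h_t, f_t$ vanish, and so the optimal value from \eqref{2.22} collapses to $\bar{J}_T^{*} = \frac{1}{2}x_0' P_0(T) x_0$ with $P_t(T)$ solving the GRDE \eqref{2.19}--\eqref{2.20}. Thus the value function of the LQR problem on $[0,T]$ is exactly the quadratic form in $P_0(T)$, and stabilizability of \eqref{5601} is equivalent to the boundedness (hence monotone convergence) of $P_0(T)$ as $T \to \infty$.

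The key steps, in order, are: (1) For the ``if'' direction, assume $\lim_{T\to\infty} P_t(T) = P$ exists. By the standard monotonicity argument --- $P_0(T)$ is monotone nondecreasing in $T$ because enlarging the horizon can only increase the accumulated nonnegative cost, or equivalently by a comparison principle for the Riccati flow --- the limit $P \ge 0$ exists and, by passing to the limit in \eqref{2.19}, satisfies the GARE \eqref{5403}--\eqref{5404}; moreover the regular condition \eqref{2.18} passes to the limit giving $\Upsilon\Upsilon^\dagger M = M$. Then one shows the closed-loop matrix $A - B\Upsilon^\dagger M$ is Hurwitz: feed the stationary feedback \eqref{5410} into the dynamics, use the GARE to derive $\frac{d}{dt}(x_t' P x_t) = -x_t'(Q + M'\Upsilon^\dagger M)x_t \le 0$, so $x_t' P x_t$ is nonincreasing and bounded, and on the subspace where $Q^{1/2}x_t \equiv 0$ invoke Assumption~\ref{a52} (detectability of $(A,\sqrt Q)$) together with $M x_t = B' P x_t$ to force $x_t \to 0$ --- a LaSalle/Barbashin-type argument. (2) For the ``only if'' direction, assume \eqref{5601} is stabilizable; pick any stabilizing $u_t = Kx_t$, plug it into the finite-horizon cost to get a uniform-in-$T$ upper bound $\bar J_T^* \le \frac{1}{2}x_0' P^{K} x_0$ where $P^K$ solves a Lyapunov equation, hence $P_0(T) \le P^K$ for all $T$; combined with monotonicity this yields convergence of $P_t(T)$. (3) Finally, identify the optimal value \eqref{5411} by evaluating the limiting value function at $t = 0$ and confirming via the verification/completion-of-squares identity that \eqref{5410} is indeed optimal.

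The main obstacle I expect is the careful handling of the pseudo-inverse $\Upsilon^\dagger$ in the singular case $\Upsilon = B'RB \not> 0$. Unlike the regular Riccati theory, one must ensure throughout that the regular condition \eqref{2.18} (range condition $\Upsilon\Upsilon^\dagger M_t(T) = M_t(T)$) is preserved under the limit and that the feedback $\Upsilon^\dagger M x_t$ genuinely annihilates the cost contribution $(Bu_t)'R(Bu_t) + 2\cdots$ in the right way --- i.e., that completing the square still works with $\Upsilon^\dagger$ in place of $\Upsilon^{-1}$, which relies precisely on the range condition so that $M'\Upsilon^\dagger\Upsilon\Upsilon^\dagger M = M'\Upsilon^\dagger M$. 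A secondary delicate point is the LaSalle-type step: detectability of $(A,\sqrt Q)$ only controls trajectories with $Q^{1/2}x_t \equiv 0$, so I must argue that along the closed loop the ``unobservable'' part of the state is exactly what detectability handles, rather than needing full observability. Since this lemma is cited from \cite{qi2018stabilization}, I would present the proof at the level of a guided reduction to that reference, filling in the specialization from the paper's own finite-horizon results and flagging where the pseudo-inverse subtleties enter.
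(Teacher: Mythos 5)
The paper itself does not prove this lemma: it is imported verbatim from \cite{qi2018stabilization} (just as Lemma \ref{l5}'s proof is omitted), so there is no in-paper argument to compare against. Your reconstruction follows the standard route that the cited reference takes: specialize the finite-horizon value function to $d_t\equiv 0$, $r=0$ (where indeed $f_t=h_t=H_t=0$ and $\bar J_T^*=\tfrac12 x_0'P_0(T)x_0$), use monotonicity of $P_0(T)$ in $T$ plus the Lyapunov-equation upper bound from any stabilizing feedback for the ``only if'' direction, pass to the limit in the GRDE for the GARE, and verify optimality by completion of squares with $\Upsilon^\dagger$ under the range condition \eqref{2.18}. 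All of that is sound and is the right decomposition.

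The one step that would fail as you phrased it is the LaSalle/Barbashin argument in the ``if'' direction. LaSalle requires a precompact (bounded) trajectory, and the decrease of $x_t'Px_t$ does not give boundedness of $x_t$ because the lemma only guarantees $P\ge 0$, not $P>0$ (the system is allowed to be uncontrollable, and $Q=c_o'\bar Qc_o$ is typically singular). The standard repair uses detectability directly rather than invariance: integrate your identity $\tfrac{d}{dt}(x_t'Px_t)=-x_t'(Q+M'\Upsilon^{\dagger}M)x_t$ to get $\int_0^\infty x_t'Qx_t\,\mathrm{d}t<\infty$ and $\int_0^\infty x_t'M'\Upsilon^{\dagger}Mx_t\,\mathrm{d}t<\infty$ without any boundedness assumption; note $\lVert B\Upsilon^{\dagger}Mx_t\rVert\le\lVert B\Upsilon^{\dagger/2}\rVert\,\lVert\Upsilon^{\dagger/2}Mx_t\rVert$, so the closed-loop input term is in $L^2$; then pick $H$ with $A+HQ^{1/2}$ Hurwitz (Assumption \ref{a52}) and write the closed loop as $\dot x_t=(A+HQ^{1/2})x_t+w_t$ with $w_t=-HQ^{1/2}x_t-B\Upsilon^{\dagger}Mx_t\in L^2$, whence $x_t\to 0$ because an exponentially stable linear system driven by an $L^2$ input has vanishing state. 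With that substitution your outline is a complete proof; the pseudo-inverse subtleties you flag (preservation of \eqref{2.18} in the limit and $M'\Upsilon^{\dagger}\Upsilon\Upsilon^{\dagger}M=M'\Upsilon^{\dagger}M$ in the square-completion) are handled exactly as you indicate.
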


%

%

We present the main results in this section based on the above analysis.
{\theorem \label{t2} \ \
Under Assumptions \ref{a51} and \ref{a52}, if the GARE (\ref{5403})-(\ref{5404}) has a semi-positive definite solution $P$, then the system (\ref{eq.1}) is bounded.
Under such conditions, the optimal stabilizing solution can be derived as
\begin{align}
u_{t}=-\Upsilon^{\dagger}Mx_{t}-\Upsilon^{\dagger}h_{t}.
 \label{5406}
\end{align}}
\begin{proof}
The proofs of Theorem \ref{t2} can be found in Appendix B.
\end{proof}

\begin{remark}
According to the above analysis, we conclude that the system is stable under the proposed controller. And recall that solving Problem 1 can minimize the performance index \eqref{eq.2} (i.e. $Bu_t+Ed_t$ is minimized), which means that the proposed controller guarantees the disturbance rejection performance.
\end{remark}
\begin{remark}
From the above analysis process, it can be seen that the solution of the uncontrollable problem is due to the application of optimal control with a new quadratic performance index to stabilize the uncontrollable but detectable system and minimize the impact of disturbance.
\end{remark}

\begin{remark}
The key idea of this study is to transform mismatched disturbance rejection control into an LQT problem. Accordingly, in contrast to the method in \cite{gandhi2020hybrid,2012Generalized,castillo}
, where the system must be controllable, the stabilization result in Theorem \ref{t2} is obtained only under the detectable assumption.
\end{remark}

\begin{remark}\label{rem.9}
Faced with a scenario in which a disturbance is only available at the current moment, we can using the receding-horizon control method to design a controller to handle such a disturbance. The specific controller design is as follows:
\begin{align}\label{f3.46}
u_t=-\Upsilon^{-1}M_{t}x_t-\Upsilon^{-1}h_{t},
\end{align}
where $\Upsilon$, $M_{t}$, and $h_{t}$ satisfy the following equations at time $t$:
\begin{equation}\label{f3.47}
\left\{
\begin{aligned}
&\dot{P}_s=M'_s\Upsilon^{-1}M_s-P_sA-A'P_s-Q,\\
&\dot{f}_s=M'_s\Upsilon^{-1}h_s-A'f_s-P_sEd+Qr,\\
&\Upsilon=B'RB,\\
&h_s=B'f_s+B'REd, \qquad\qquad\qquad\qquad   t\leq s \leq t+\tau\\
&d=d_{t},\\
&M_s=B'P_s,\\
&f_{t+\tau}=P_{t+\tau}r.
\end{aligned}
\right.
\end{equation}
%
\end{remark}

\section{Numerical Examples}\label{sec5}
In this section, two examples are presented from different perspectives, illustrating the effectiveness of the proposed controller. The first example emphasizes the mismatched attenuation effect of the proposed method in an uncontrolled system compared to PID control. The second example is the application of the proposed method compared to PID control and GESOBC in an PMDC for disturbance rejection.

\subsection{Example A: Disturbance rejection for an uncontrollable system}
In the case where a system is detectable but uncontrollable, compare the PID control to verify the effectiveness of the control law (\ref{f3.46}) in rejecting mismatched disturbance. Consider the system (\ref{eq.1}) with the following parameters:
\begin{equation}
\begin{aligned}\label{4.3}
&{A}=\begin{bmatrix}-4&0&0\cr 0&3&1\cr0&-2&-1\end{bmatrix}, {B}=\begin{bmatrix}0\cr 1\cr 0\end{bmatrix},{E}=\begin{bmatrix}0\cr0\cr1\end{bmatrix}, {c}_o=\begin{bmatrix}0&0&1\end{bmatrix}.\notag
\end{aligned}
\end{equation}
\begin{remark}
It is trivial to determine that the state $x^1$ in the above system is stable but not controllable, and we attempt to demonstrate the superiority of our proposed controller.
\end{remark}

The initial state of the system is $x_0 =\begin{bmatrix}1&1&0\end{bmatrix}'$ and the disturbance $d=3$ acts on the system from $t = 0.5s$. The controller aims to remove the disturbance from the regulated state $x^3=c_ox$. In the proposed control law (\ref{f3.46}), the horizon $\tau$ is set to $0.05s$ and the terminal condition $P_{t+\tau}=O_{3\times3}$.
The reference $c_or$ is set to $0$ according to the goals defined above. The weight matrice are selected as $\bar Q=10^4$ and $R=I_{3\times3}$. $P_t$, $f_t$, $h_t$, $M_t$, and $\Upsilon$ can be calculated according to (\ref{f3.47}) at every time instance $t$ to derive $u_{t}$. The proposed control method was compared to PID control to track the reference and disturbance rejection effects, and the parameters in the PID method were set to $K_p=80$, $K_i=400$, and $K_d=10$ through optimal tuning. The simulation results for Example A are presented in Fig. \ref{fig_1}. 

\begin{figure}[htbp]
    \begin{center}
        \includegraphics[width=9.0cm,height=8cm]{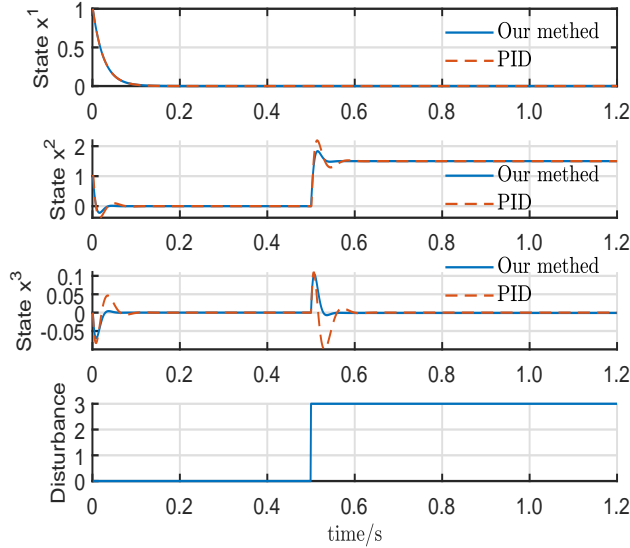}
    \end{center}
    \caption{Simulation result of Example A}\label{fig_1}
\end{figure}
In Fig. \ref{fig_1}, one can see that state $x^3$ achieves disturbance rejection quickly and stabilization is achieved for the uncontrollable state $x^1$. Therefore, the proposed method is effective for the disturbance rejection than PID control of uncontrollable systems with mismatched disturbances.

\subsection{Example B: Application to an PMDC system}
When this method is applied to the PMDC speed disturbance rejection, it is usually necessary to quickly stabilize the speed, that is, to eliminate the influence of load changes on the speed. In this case, the load variation with respect to the voltage (control input) is a mismatched disturbance. Existing PID control and GESOBC methods are mostly used for PMDC system, but both are difficult to achieve a balance between disturbance rejection and optimal performance.  Therefore, the proposed method is applied to the disturbance rejection of the speed of PMDC, and compared with the effect of PID control and GESOBC to demonstrate the effectiveness of the proposed method.

Consider PMDC system \cite{krause2013analysis} to demonstrate the effectiveness of load (disturbance) variation on speed disturbance immunity, the target system is defined as follows
\begin{align}{}\label{f5.1}
\dot{x}(t)=Ax(t)+{B}{u}(t)+E{d}(t)
\end{align}
with the coefficient matrix
\begin{align}\label{f5.2}
&{A}=\begin{bmatrix}-0.42&106.16\cr41.67&41.67\end{bmatrix},B=\begin{bmatrix}0\cr 83.33\end{bmatrix},E=\begin{bmatrix}-212.31\cr 0\end{bmatrix},{c}_o=\begin{bmatrix}1&0\end{bmatrix}.
\end{align}
where $x(t)=[w_m\quad i_a]'$ represents the state variables, ${y}(t)=[w_m\quad i_a]'$ is the control output, ${u}(t)= V_a(t)$ is the control input, ${d}(t)=T_L(t)$ represents the disturbance caused by load torque changes, and $w_m$ is the regulated output. $V_a(t)$ is the armature voltage $(V)$, $T_L(t)$ is load torque $(Nm)$, $w_m$  is angular speed $(rad/s)$; $i_a(t)$  is armature current $(A)$.

The disturbance in the PMDC system is the measurable load torque. The control strategy in this paper can be utilized to study the speed control on PMDC system under variable loads.  Disturbance change is $5Nm$ loaded from $0.6s$ action. From (\ref{f5.1}) and (\ref{f5.2}), one can see that the coefficient ratios of the disturbance and control inputs into different channels of the system are different with $rank(B,E)>rank(E)$. In other words, the disturbances in the system are mismatched. The control objective is to eliminate the disturbance in the output angular speed under variable loads, that is, the angular speed is maintained at 60 $rad/s$ when the load changes. In the disturbance rejection control method proposed in Theorem \ref{t1}, the terminal time $T$ is set $1.2s$ and the terminal condition $P_{T}=O$ (zero matrix). The weight matrice are selected as $\bar Q=10^4$ and $R=I_{3\times3}$. $P_t$, $f_t$, $h_t$, $M_t$, and $\Upsilon$ can be calculated using (\ref{eq.4}) to obtain $u_{t}$. The proposed control method was compared to PID control to track the reference and disturbance rejection effects. According to \cite{2012Generalized}, the state feedback matrix is chosen as $[-0.8 -0.5]$ and the disturbance compensation gain of the GESOBC method is calculated as $K_d=2$. The parameters in the PID method were set to $K_p=0.1$, $K_i=2.5$, and $K_d=0.09$ through optimal tuning.

The response curves of the PMDC system obtained using the proposed method, PID and GESOBC are presented in Fig. \ref{fig_4}. 
Fig. \ref{fig_4} reveals that the proposed control method achieves a fast and smooth transition from the set point when the disturbance variation. Therefore, it can be concluded that the proposed method is superior to GESOBC and PID control for excellent disturbance rejection performance can reduce the impact on the PMDC system. 

These results demonstrate that the proposed method achieves satisfactory performance in terms of mismatched disturbance rejection. In this case, the proposed method is more effective than the GESOBC and PID control.
\begin{figure}[htbp]
    \begin{center}      
        \includegraphics[width=9.0cm,height=6cm]{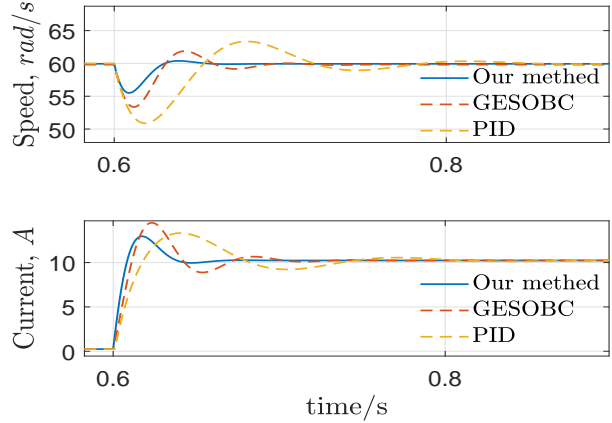}
        \caption{Simulation result of Example B}\label{fig_4}
    \end{center}
\end{figure}

\section{Conclusion}\label{sec6}
By introducing a new quadratic performance index such that the mismatched disturbance rejection problem was transformed into an LQT problem, the regulated state to track a reference and minimize the impact of disturbance.
The disturbance rejection controller and the necessary and sufficient conditions for the solvability of the problem are obtained by solving a FBDE over a finite horizon. 
In the case of an infinite horizon, a sufficient condition for system stability is obtained under detectable condition.
It is noteworthy that this approach weakens the assumption of controllability. Several examples were presented to illustrate the effectiveness of the proposed method. Although our proposed controller exhibited excellent performance, it requires known disturbances. In the future, we will further study the design of a mismatched unknown disturbance (uncertainty) rejection controller based on the concepts and methods presented in this paper.

\section{Appendix}\label{sec6}
\subsection{Appendix A: Proof of Theorem \ref{t1}}
\begin{proof}
“Necessity”: When Problem \ref{problem} admits a unique
solution, we prove that $\Upsilon=B'RB>0$.


Assume that 
\begin{align}\label{eq.3.222}
\lambda_t=P_tx_t+f_t.
\end{align}
Substituting the above formula into \eqref{f2.10}, one can yields that
\begin{align}
B'RBu_t+B'REd_t+B'P_tx_t+B'f_t=0.  \label{eq.6}
\end{align}

From the solvability of Problem 1, the regular condition $\Upsilon\Upsilon^{\dagger}V_t=V_t$ $(V_t=-B'P_tx_t-B'REd_t-B'f_t)$ is introduced. Thus, one has
\begin{align}\label{eq.3.1}
u_t&=-\Upsilon^{\dagger}(B'P_tx_t+B'REd_t+B'f_t)+(I-\Upsilon\Upsilon^{\dagger})L_t,
\end{align}
where $L_t$ is an arbitrary matrix.

Let $M_t=B'P_t$ and $h_t=B'f_t+B'REd_t$, then
\begin{align}\label{eq.3.22}
u_t=-\Upsilon^{\dagger}M_tx_t-\Upsilon^{\dagger}h_t-(I-\Upsilon\Upsilon^{\dagger})L_t.
\end{align}
By differentiating both sides of equation \eqref{eq.3.222} and combining \eqref{f2.10} and \eqref{eq.3.22}, one has
\begin{align}\label{eq.8}
\dot{\lambda}_t=&\dot{P}_tx_t+P_t\dot{x}_t+\dot{f}_t\notag\\
=&\dot{P}_tx_t+P_t{Ax}_t+P_t{B} {u}_t+P_t{E}d_t+\dot{f}_t\notag\\
=&\dot{P}_tx_t+P_t{Ax}_t-M'_t\Upsilon^{\dagger}M_tx_t-M'_t\Upsilon^{\dagger}h_t-M'_t(I-\Upsilon\Upsilon^{\dagger})L_t+P_t{E}d_t+\dot{f}_t.
\end{align}
Combined with the \eqref{f2.10} and the arbitrariness of $x_{t}$, it yields
\begin{align}
\dot{P}_t&=M'_t\Upsilon^{\dagger}M_t-P_tA-A'P_t-Q,\label{eq.8.11}\\
\dot{f}_t&=M'_t\Upsilon^{\dagger}h_t+M'_t(I-\Upsilon\Upsilon^{\dagger})L_t-A'f_t-P_tEd_t+Qr.\label{eq.8.12}
\end{align}
From \eqref{f2.10}, the following two boundary value conditions can be derived
$$x_0=x,f_T=-P_Tr.$$

Combining \eqref{eq.3.222}, \eqref{eq.8.11}, and \eqref{eq.8.12} with FBDEs \eqref{f2.10}, it is not difficult to find that \eqref{eq.3.222} is the solutions of FBDEs.

Note that \eqref{eq.3.22} and  $\Upsilon=B'RB\ge0$, we know that the $u_t$ is uniquely solvable if and only if $\Upsilon$ is invertible (that is, $\Upsilon>0$). Therefore, $\Upsilon^{\dagger}$ in \eqref{eq.3.22}, \eqref{eq.8.11}, and \eqref{eq.8.12} can be replaced by $\Upsilon^{-1}$ yields that:
\begin{align}
u_t&=-\Upsilon^{-1}M_tx_t-\Upsilon^{-1}h_t,\label{eq.3.23}\\
\dot{P}_t&=M'_t\Upsilon^{-1}M_t-P_tA-A'P_t-Q,\label{eq.8.111}\\
\dot{f}_t&=M'_t\Upsilon^{-1}h_t-A'f_t-P_tEd_t+Qr.\label{eq.8.122}
\end{align}
Thus, the necessity is proved.

“Sufficiency”: 
Introduce the identity as follows
\begin{align}\label{2.13}
\frac{1}{2}&x'_TPx_T+{x}_T'f_T+H_T-\frac{1}{2}x'_0P_0x_0-{x}'_0f_0-H_0=\int_{0}^{T}\frac{\mathrm{d}}{\mathrm{d}t}[\frac{1}{2}x_t'P_tx_t+x_t'f_t+H_t]\mathrm{d}t.
\end{align}
Furthermore, using \eqref{f2.10}, \eqref{eq.8.111}, and \eqref{eq.8.122}, the above formula can be rewritten
\begin{align}\label{2.13.1}
\frac{1}{2}&x'_TPx_T+{x}_T'f_T+H_T-\frac{1}{2}x'_0P_0x_0-{x}'_0f_0-H_0\notag\\
=&\int_{0}^{T}[\frac{1}{2}\dot{x}_t'P_tx_t+\frac{1}{2}x_t'\dot{P}_tx_t+\frac{1}{2}x_t'P_t\dot{x}_t+\dot{x}_t'f_t+x_t'\dot{f}_t+\dot{H}_t]\mathrm{d}t\notag\\
=&\int_{0}^{T}[\frac{1}{2}u'_t\Upsilon u_t+u'_tM_tx_t+\frac{1}{2}x_t'M'_t\Upsilon^{-1}M_tx_t+u'_th_t-u'_tB'REd_t+d'_tE'f_t\notag\\
&+x_t'M'_t\Upsilon^{-1}h_t+x_t'Qr+\dot{H}_t-\frac{1}{2}x_t'Qx_t-\frac{1}{2}u'_t\Upsilon u_t]\mathrm{d}t.
\end{align}
Putting \eqref{eq.3.11} into the above equation and noting the terminal condition $f_T=-P_Tr$, one has
\begin{align}\label{2.15}
J=&\frac{1}{2}x'_0P_0x_0+{x}'_0f_0+H_0\notag\\
&+\int_{0}^{T}\frac{1}{2}[u_t+\Upsilon^{-1}M_tx_t+\Upsilon^{-1}h_t]'\Upsilon[u_t+\Upsilon^{-1}M_tx_t+\Upsilon^{-1}h_t]\mathrm{d}t.
\end{align}
The unique solvability of Problem \ref{problem} is easily derived based on the positive definiteness of $\Upsilon$.

Sufficiency is demonstrated. The proof is complete.

\end{proof}
\subsection{Appendix B: Proof of Theorem \ref{t2}}
\begin{proof} \ Suppose the GARE (\ref{5403})-(\ref{5405}) has a solution $P\geq 0$. We will prove the bounded stabilization of the system (\ref{eq.1}). The following are explicit expressions for $h_{t}$ and $f_{t}$:
\begin{align}
&h_t(T)=B'\int_{0}^{T}e^{\bar{A}_{t}(T)(t-\tau)}F_{\tau}(T)d_\tau\mathrm{d}\tau+B'\mathcal{R}_{t}(T)r+B'REd_t, \label{51102}\\
&f_t(T)=\int_{0}^{T}e^{\bar{A}_{t}(t-\tau)}F_{\tau}(T)d_\tau\mathrm{d}\tau+\mathcal{R}_{t}(T)r,\label{51103}
\end{align}
where
\begin{align}
\bar{A}_{t}(T)=&A'-M'_t(T)\Upsilon^{-1}B',\notag\\
F_{t}(T)=&(M'_t(T)\Upsilon^{-1}B'R-P_t)E,\label{51104}\\
\mathcal{R}_{t}(T)=&\int_{0}^{T}e^{\bar{A}_{t}(T)(t-\tau)}Q\mathrm{d}\tau, \label{51105}\\  
{f}_{T}(T)=&-P_{T}(T)r.\notag
\end{align}


First, we demonstrate the boundedness of $h_{t}$. From Lemma \ref{l56}, $\rho(A-B\Upsilon^{\dagger}M)<0$.
Considering the convergence of $P_{t}(T)$, we can find that $F_{t}(T)$ in \eqref{51104} and $\mathcal{R}_{t}(T)$ in \eqref{51105} are convergent. Therefore, there exist constants $\mathcal{C}_{F},  \mathcal{C}_{\mathcal{R}}$ satisfying $F_{t}\leq \mathcal{C}_{F}, \mathcal{R}_{t}\leq \mathcal{C}_{\mathcal{R}}$.

From \eqref{51102} and Assumption \ref{a51}, we have
\begin{align}
\lVert\int_{0}^{\infty}e^{\bar{A}_{t}(t-\tau)}&F_{\tau}d_\tau\mathrm{d}\tau\lVert\leq\int_{0}^{\infty}\lVert e^{\bar{A}_{t}(t-\tau)}	\lVert 	\lVert F_{\tau}	\lVert	\lVert d_\tau \lVert	\mathrm{d}\tau\leq \mathcal{C}_{F} \bar{d}\int_{0}^{\infty}\lVert e^{\bar{A}_{t}(t-\tau)}	\lVert	\mathrm{d}\tau\leq \mathcal{C},
\end{align}
where $\rho(\bar{A}_{t})<0$ guarantees the boundedness of  $\int_{0}^{\infty}\lVert e^{\bar{A}_{t}(t-\tau)}	\lVert	\mathrm{d}\tau$. Therefore, $h_{t}$ is bounded. Similarly, $f_{t}$ is bounded.

Note that when $u_{t}=-\Upsilon^{\dagger}Mx_{t}-\Upsilon^{\dagger}h_{t}$,  we obtain
\begin{align}
\dot{x}_{t}=(A-B\Upsilon^{\dagger}M)x_{t}-B\Upsilon^{\dagger}h_{t}.\label{5408}
\end{align}
From the boundedness of $h_{t}$ and $\rho(A-B\Upsilon^{\dagger}M)<0$, we find that the system (\ref{eq.1}) has bounded stability with $u_{t}=-\Upsilon^{\dagger}Mx_{t}-\Upsilon^{\dagger}h_{t}$.
\end{proof}

\bibliographystyle{ieeetr}
\bibliography{reference}

\begin{thebibliography}{10}

\bibitem{chen2015disturbance}
W.-H. Chen, J.~Yang, L.~Guo, and S.~Li, ``Disturbance-observer-based control
  and related methods—{A}n overview,'' {\em IEEE Transactions on Industrial
  Electronics}, vol.~63, no.~2, pp.~1083--1095, 2015.

\bibitem{li2014disturbance}
S.~Li, J.~Yang, W.-H. Chen, and X.~Chen, {\em Disturbance observer-based
  control: methods and applications}.
\newblock CRC Press, 2014.

\bibitem{GUO20132911}
B.-Z. Guo and F.-F. Jin, ``The active disturbance rejection and sliding mode
  control approach to the stabilization of the euler--bernoulli beam equation
  with boundary input disturbance,'' {\em Automatica}, vol.~49, no.~9,
  pp.~2911--2918, 2013.

\bibitem{4796887}
J.~Han, ``From {PID} to active disturbance rejection control,'' {\em IEEE
  transactions on Industrial Electronics}, vol.~56, no.~3, pp.~900--906, 2009.

\bibitem{5572931}
Y.~Huang, W.~Xue, and X.~Yang, ``Active disturbance rejection control:
  Methodology, theoretical analysis and applications,'' in {\em Proceedings of
  the 29th {C}hinese Control Conference}, pp.~6083--6090, 2010.

\bibitem{ZHAO2014882}
S.~Zhao and Z.~Gao, ``Modified active disturbance rejection control for
  time-delay systems,'' {\em ISA transactions}, vol.~53, no.~4, pp.~882--888,
  2014.

\bibitem{shtessel2014sliding}
Y.~Shtessel, C.~Edwards, L.~Fridman, A.~Levant, {\em et~al.}, {\em Sliding mode
  control and observation}, vol.~10.
\newblock Springer, 2014.

\bibitem{young1999control}
K.~D. Young, V.~I. Utkin, and U.~Ozguner, ``A control engineer's guide to
  sliding mode control,'' {\em IEEE Transactions on Control Systems
  Technology}, vol.~7, no.~3, pp.~328--342, 1999.

\bibitem{chen2003nonlinear}
W.-H. Chen, ``Nonlinear disturbance observer-enhanced dynamic inversion control
  of missiles,'' {\em Journal of Guidance, Control, and Dynamics}, vol.~26,
  no.~1, pp.~161--166, 2003.

\bibitem{chwa2004compensation}
D.~Chwa, J.~Y. Choi, and J.~H. Seo, ``Compensation of actuator dynamics in
  nonlinear missile control,'' {\em IEEE Transactions on Control Systems
  Technology}, vol.~12, no.~4, pp.~620--626, 2004.

\bibitem{mohamed2007design}
Y.~A.~I. Mohamed, ``Design and implementation of a robust current-control
  scheme for a {PMSM} vector drive with a simple adaptive disturbance
  observer,'' {\em IEEE Transactions on Industrial Electronics}, vol.~54,
  no.~4, pp.~1981--1988, 2007.

\bibitem{isidori1985nonlinear}
A.~Isidori, {\em Nonlinear control systems: an introduction}.
\newblock Springer, 1985.

\bibitem{castillo}
A.~Castillo, P.~Garc{\'\i}a, R.~Sanz, and P.~Albertos, ``Enhanced extended
  state observer-based control for systems with mismatched uncertainties and
  disturbances,'' {\em ISA Transactions}, vol.~73, pp.~1--10, 2018.

\bibitem{chen2016adrc}
S.~Chen, W.~Bai, and Y.~Huang, ``Adrc for systems with unobservable and
  unmatched uncertainty,'' in {\em Proceedings of the 35th {C}hinese control
  conference}, pp.~337--342, IEEE, 2016.

\bibitem{ginoya2013sliding}
D.~Ginoya, P.~Shendge, and S.~Phadke, ``Sliding mode control for mismatched
  uncertain systems using an extended disturbance observer,'' {\em IEEE
  Transactions on Industrial Electronics}, vol.~61, no.~4, pp.~1983--1992,
  2013.

\bibitem{2012Generalized}
S.~Li, J.~Yang, W.-H. Chen, and X.~Chen, ``Generalized extended state observer
  based control for systems with mismatched uncertainties,'' {\em IEEE
  Transactions on Industrial Electronics}, vol.~59, no.~12, pp.~4792--4802,
  2011.

\bibitem{9339876}
Z.-H. Wu, F.~Deng, B.-Z. Guo, C.~Wu, and Q.~Xiang, ``Backstepping active
  disturbance rejection control for lower triangular nonlinear systems with
  mismatched stochastic disturbances,'' {\em IEEE Transactions on Systems, Man,
  and Cybernetics: Systems}, vol.~52, no.~4, pp.~2688--2702, 2021.

\bibitem{yang2012nonlinear}
J.~Yang, S.~Li, and W.-H. Chen, ``Nonlinear disturbance observer-based control
  for multi-input multi-output nonlinear systems subject to mismatching
  condition,'' {\em International Journal of Control}, vol.~85, no.~8,
  pp.~1071--1082, 2012.

\bibitem{6129407}
J.~Yang, S.~Li, and X.~Yu, ``Sliding-mode control for systems with mismatched
  uncertainties via a disturbance observer,'' {\em IEEE Transactions on
  Industrial Electronics}, vol.~60, no.~1, pp.~160--169, 2012.

\bibitem{gandhi2020hybrid}
R.~V. Gandhi and D.~M. Adhyaru, ``Hybrid extended state observer based control
  for systems with matched and mismatched disturbances,'' {\em ISA
  Transactions}, vol.~106, pp.~61--73, 2020.

\bibitem{yang1994disturbance}
W.-C. Yang and M.~Tomizuka, ``{Disturbance rejection through an external model
  for nonminimum phase systems},'' {\em Journal of Dynamic Systems,
  Measurement, and Control}, vol.~116, pp.~39--44, 03 1994.

\bibitem{10.1115/1.2896180}
M.~Tomizuka, K.~Chew, and W.~Yang, ``Disturbance rejection through an external
  model,'' {\em Journal of Dynamic Systems, Measurement, and Control},
  vol.~112, pp.~559--564, 12 1990.

\bibitem{9050185}
T.~Pengliang, ``Feedback linearization of mimo nonlinear system with measurable
  disturbance,'' in {\em Proceedings of the 12th International conference on
  measuring technology and mechatronics automation}, pp.~744--749, 2020.

\bibitem{GUO201941}
H.~Guo, D.~Cao, H.~Chen, Z.~Sun, and Y.~Hu, ``Model predictive path following
  control for autonomous cars considering a measurable disturbance:
  Implementation, testing, and verification,'' {\em Mechanical Systems and
  Signal Processing}, vol.~118, pp.~41--60, 2019.

\bibitem{qi2018stabilization}
Q.~Qi, H.~Zhang, and Z.~Wu, ``Stabilization control for linear continuous-time
  mean-field systems,'' {\em IEEE Transactions on Automatic Control}, vol.~64,
  no.~8, pp.~3461--3468, 2018.

\bibitem{krause2013analysis}
P.~C. Krause, O.~Wasynczuk, S.~D. Sudhoff, and S.~D. Pekarek, {\em Analysis of
  electric machinery and drive systems}, vol.~75.
\newblock John Wiley \& Sons, 2013.

\end{thebibliography}
\end{document}